\def\C{\mathcal{C}}
\def\span{\mathrm{Lin}}
\def\RR{\mathbb{R}}
\def\NN{\mathbb{N}}
\def\FF{\mathbb{F}}
\def\kk{\RR}
\def\u{\phi}
\def\uu{\psi}
\def\ub{\boldsymbol{\phi}}
\def\uub{\boldsymbol{\psi}}
\def\f{\mathbf{f}}
\def\g{\mathbf{g}}
\def\p{\mathbf{p}}
\def\LL{\mathbf{L}}
\def\H{\mathbf{H}}
\def\ll{\mathbf{l}}
\def\h{\mathbf{h}}
\def\K{\mathbf{K}}
\newtheorem{lem}{Lemma}
\newtheorem{thm}{Theorem}
\newtheorem{cor}{Corollary}
\begin{document}
\title{Local linear dependence of linear partial differential operators}
\author{J. Cimpri\v c}
\date{\today}
\begin{abstract}
We show that any finite set of linear partial differential operators with continuous coefficients is linearly dependent if and only if it is locally linearly dependent.
It follows that the reflexive closure of any finite set of such operators is equal to its linear span. The last statement can be rephrased as a weak nullstellensatz for
linear partial differential operators.
\end{abstract}

\maketitle

\section{Introduction}

Linear operators $L_1,\ldots,L_r$ from a vector space $V$ to a vector space $W$ are \textit{locally (or directionally) linearly dependent} if vectors $L_1 v,\ldots,L_r v$ are linearly dependent for every $v \in V$.
This notion was introduced in \cite{bs}. Linear dependence of $L_1,\ldots,L_r$ always implies local linear dependence but the converse can fail even for finite-dimensional $V$ and $W$. 
The aim of this short note is to show that for linear partial differential operators with continuous coefficients local linear dependence implies linear dependence. More precisely, we will prove the following result:

\begin{thm}
\label{thmmain}
Let $d$ be an integer, $U$ an open subset of $\RR^d$,
$\C(U)$ the vector space of all continuous real functions on $U$ and
$\C^{(\infty)}_c(U)$ the vector space of all infinitely differentiable real functions with compact support on $U$.

For every linear partial differential operators $L_1,\ldots,L_r$ with coefficients from $\C(U)$ the following are equivalent:
\begin{enumerate}
\item For every $\u \in \C^{(\infty)}_c(U)$, $L_1 \u,\ldots,L_r \u$ are linearly dependent in $\C(U)$.
\item $L_1,\ldots,L_r$ are linearly dependent.
\end{enumerate}
We can also replace the vector space $\C^{(\infty)}_c(U)$ with the  vector space  $\RR[x_1,\ldots,x_d]$.
\end{thm}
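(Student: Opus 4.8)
The plan is to prove the nontrivial implication by contraposition, reducing local linear dependence to an elementary statement about evaluation of the coefficients. The direction (2) $\Rightarrow$ (1) is immediate: if $\sum_i c_i L_i = 0$ with some $c_i \neq 0$, then $\sum_i c_i (L_i \u) = 0$ for every $\u$, so $L_1\u,\ldots,L_r\u$ are linearly dependent. For (1) $\Rightarrow$ (2) I would assume that $L_1,\ldots,L_r$ are linearly \emph{independent} as operators and produce a single test function $\u$ (smooth with compact support, or a polynomial) such that $L_1\u,\ldots,L_r\u$ are linearly independent in $\C(U)$, contradicting (1).

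First I would record the algebraic reformulation. Writing $L_i = \sum_\alpha a_{i,\alpha}\partial^\alpha$ with $a_{i,\alpha}\in\C(U)$ and $\alpha$ running over a common finite set of $m$ multi-indices, the independence of the monomials $\xi^\alpha$ shows that $\sum_i c_i L_i = 0$ holds if and only if $\sum_i c_i a_{i,\alpha} = 0$ for each $\alpha$. Hence linear independence of the operators is equivalent to linear independence over $\RR$ of the coefficient tuples $(a_{i,\alpha})_\alpha \in \C(U)^m$, $i=1,\ldots,r$. Equivalently, the evaluation covectors $(a_{1,\alpha}(q),\ldots,a_{r,\alpha}(q))$, indexed by all pairs $(\alpha,q)$ with $q\in U$, have trivial common kernel and therefore span $(\RR^r)^*$. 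Selecting a basis among them yields multi-indices $\beta_1,\ldots,\beta_r$ and points $q_1,\ldots,q_r\in U$, a priori not distinct, with $\det[a_{i,\beta_j}(q_j)]_{i,j}\neq 0$.

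The next step is to make the points distinct. The map $(t_1,\ldots,t_r)\mapsto \det[a_{i,\beta_j}(t_j)]$ is continuous on $U^r$ and nonzero at $(q_1,\ldots,q_r)$, hence nonzero on a neighborhood; since $U$ is open in $\RR^d$, the tuples with pairwise distinct coordinates are dense in $U^r$, so I may replace the $q_j$ by distinct points $q_1',\ldots,q_r'$ while keeping the determinant nonzero. Only continuity of the coefficients is used here, which is why the hypothesis is sharp. Now I would construct $\u$ with prescribed jets, namely $(\partial^\gamma \u)(q_j') = \delta_{\gamma,\beta_j}$ for every occurring $\gamma$ and every $j$. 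Because the $q_j'$ are distinct, this is achieved by a sum of bump functions supported in disjoint balls for the $\C^{(\infty)}_c(U)$ statement, and by Hermite interpolation at distinct nodes for the $\RR[x_1,\ldots,x_d]$ statement. Then $(L_i\u)(q_j') = a_{i,\beta_j}(q_j')$, so the matrix $[(L_i\u)(q_j')]_{i,j}$ is invertible; evaluating any relation $\sum_i c_i(L_i\u)=0$ at the $q_j'$ forces $c=0$, so $L_1\u,\ldots,L_r\u$ are linearly independent.

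The main obstacle I anticipate is the passage to distinct points: it is precisely what both enables the jet prescription (disjoint supports, respectively interpolation at distinct nodes) and pins down the role of continuity of the coefficients. Conceptually, the heart of the matter is the claim that linear independence of the operators really does localize to an invertible $r\times r$ evaluation matrix, rather than the defining information being spread inseparably across $U$; once this and the density/perturbation argument are in place, the construction of $\u$ is routine.
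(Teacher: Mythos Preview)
Your argument is correct and follows essentially the same strategy as the paper: recast independence of the $L_i$ as independence of the coefficient tuples $(a_{i,\alpha})_\alpha\in\C(U)^m$, extract an invertible $r\times r$ evaluation matrix (this is exactly Lemma~\ref{vecpoints}), pass to distinct points, and then prescribe the jets of $\u$ by Hermite interpolation or disjoint bump functions. The paper proves the matrix version (Theorem~\ref{vecthmmain}) and therefore organizes the evaluation data by multi-index, using a larger index set $\Lambda$ and an intermediate linear-independence statement for the functions $\p_{i,\alpha,k}\u_k^{(\alpha)}$; your choice of exactly $r$ pairs $(\beta_j,q_j')$ together with the direct evaluation $(L_i\u)(q_j')=a_{i,\beta_j}(q_j')$ is a streamlined equivalent in the scalar case.
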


In Section \ref{sec3} we will prove a generalization of Theorem \ref{thmmain} to matrices of linear partial differential operators; see Theorem \ref{vecthmmain}.
An important ingredient of the proof is multivariate Hermite interpolation which will be discussed in Section \ref{sec2}. 

Corollary \ref{weaknsatz} is a simple corollary of Theorem \ref{thmmain}.
In Section \ref{sec4} we will prove its generalization 
to matrices of linear partial differential operators; see Theorem \ref{vecweaknsatz}.

\begin{cor}
\label{weaknsatz}
Let $L_1,\ldots,L_r$ and $L$ be linear partial differential operators with coefficients from $\C(U)$ where $U$ is an open subset of $\RR^d$.
The following are equivalent:
\begin{enumerate}
\item For every $\u,\uu \in \C^{(\infty)}_c(U)$ such that $\langle L_1 \u, \uu \rangle=\ldots=\langle L_r \u,\uu \rangle=0$
we have that $\langle L \u, \uu \rangle=0$. (Here $\C_c(U)$ has standard inner product $\langle f,g \rangle=\int_U fg$.)
\item For every $\u \in \C^{(\infty)}_c(U)$ we have that $L \u \in \span\{L_1 \u,\ldots,L_r \u\}$.
\item $L \in \span\{L_1,\ldots,L_r\}$.
\end{enumerate}
\end{cor}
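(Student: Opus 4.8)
The plan is to prove all equivalences by routing through condition (2): the pair $(2)\Leftrightarrow(3)$ carries the genuine content and is handled by Theorem \ref{thmmain}, while $(1)\Leftrightarrow(2)$ is an elementary duality statement that I would verify pointwise in $\u$. The implication $(3)\Rightarrow(2)$ is immediate, since $L=\sum_i c_i L_i$ with constant $c_i$ gives $L\u=\sum_i c_i L_i\u\in\span\{L_1\u,\ldots,L_r\u\}$ for every $\u$. For $(2)\Rightarrow(3)$ I would first reduce to the case that $L_1,\ldots,L_r$ are linearly independent: pick a maximal independent subset, say $L_1,\ldots,L_s$; as each discarded $L_j$ is a fixed constant-coefficient combination of these, one has $\span\{L_1\u,\ldots,L_r\u\}=\span\{L_1\u,\ldots,L_s\u\}$ for every $\u$, so (2) is unaffected and the spans of the operators agree. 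Then (2) says that $L\u,L_1\u,\ldots,L_s\u$ are linearly dependent for every $\u\in\C^{(\infty)}_c(U)$, i.e.\ the operators $L,L_1,\ldots,L_s$ are locally linearly dependent. Theorem \ref{thmmain}, applied to these $s+1$ operators, yields constants $c_0,\ldots,c_s$, not all zero, with $c_0 L+\sum_{k=1}^s c_k L_k=0$; if $c_0=0$ the relation $\sum_k c_k L_k=0$ would contradict the independence of $L_1,\ldots,L_s$, so $c_0\neq0$ and $L=-c_0^{-1}\sum_k c_k L_k\in\span\{L_1,\ldots,L_r\}$.

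It then remains to establish $(1)\Leftrightarrow(2)$, which I would carry out with $\u$ fixed, writing $f_i:=L_i\u$ and $f:=L\u$ for the resulting continuous functions. The direction $(2)\Rightarrow(1)$ is immediate: writing $f=\sum_i c_i f_i$ (with $c_i$ depending on the fixed $\u$, as (2) permits), we get $\langle f,\uu\rangle=\sum_i c_i\langle f_i,\uu\rangle$, which vanishes whenever all $\langle f_i,\uu\rangle$ do. For $(1)\Rightarrow(2)$, I would read (1) as the inclusion $\bigcap_i\ker\langle f_i,\cdot\rangle\subseteq\ker\langle f,\cdot\rangle$ among linear functionals on $\C^{(\infty)}_c(U)$; the standard linear-algebra fact that a functional vanishing on the common kernel of finitely many functionals is a linear combination of them produces constants $c_i$ with $\langle f-\sum_i c_i f_i,\uu\rangle=0$ for every $\uu\in\C^{(\infty)}_c(U)$. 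Since $f-\sum_i c_i f_i$ is continuous, the fundamental lemma of the calculus of variations forces $f=\sum_i c_i f_i$, which is precisely (2) for this $\u$.

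The only real difficulty is concentrated in Theorem \ref{thmmain}, which I may assume; the rest is bookkeeping. The one point that I expect to require care is exactly that bookkeeping in $(2)\Rightarrow(3)$: applying the theorem to the enlarged family only produces \emph{some} nontrivial dependence, and one must guarantee that the coefficient of $L$ is nonzero before solving for $L$. This is what the preliminary reduction to an independent $L_1,\ldots,L_s$ secures, so I would be sure to perform that reduction explicitly before invoking Theorem \ref{thmmain} rather than applying it to $L,L_1,\ldots,L_r$ directly.
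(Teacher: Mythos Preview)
Your proposal is correct and follows essentially the same route as the paper (which proves the general matrix version, Theorem~\ref{vecweaknsatz}): the implication $(2)\Rightarrow(3)$ is obtained exactly as you describe, by first passing to a linearly independent subfamily and then invoking Theorem~\ref{thmmain}, and $(1)\Rightarrow(2)$ is the same finite-dimensional duality argument, which the paper phrases as $W^{\perp\perp}=W$ for a finite-dimensional subspace of an inner product space (Lemma~\ref{lemperp}) rather than via the ``functional vanishing on a common kernel'' lemma plus the fundamental lemma of the calculus of variations. Your version of $(1)\Rightarrow(2)$ is in fact slightly more careful about the fact that the test functions $\uu$ range only over $\C_c^{(\infty)}(U)$ while the $L_i\u$ are merely continuous.
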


Corollary \ref{weaknsatz} can be considered as a weak version of the Nullstellensatz for linear partial differential operators from \cite{c1}
which will also be discussed in Section \ref{sec4}.

Another motivation for studying this topic comes from Free Real Algebraic Geometry where similar results already exist.
Details will be given in Section \ref{sec5}.

\section{Auxiliary results}
\label{sec2}

Hermite interpolation is an extension of Lagrange interpolation. For given lists $(z_q,c_{q,0},c_{q,1},\ldots,c_{q,k_q})$, $q=1,\ldots,m$, of real numbers
we can find a real univariate polynomial $P$ such that $P^{(k)}(z_q)=c_{q,k}$ for every $q=1,\ldots,m$, $k=0,1,\ldots,k_q$. 
Multivariate Hermite interpolation is an extension of this to several variables.

\begin{thm}
\label{lorenz19}
Let nonnegative integers $m$ and $k_q, q=1,\ldots,m$, be given. 
For every points $z_q \in \RR^d$, $q=1,\ldots,m$, and every values $c_{q,\alpha}$ with $q=1,\ldots,m$ and $\alpha \in \NN_0^d$, $|\alpha| \le k_q$ 
there is a polynomial $P \in \Pi^d:=\RR[x_1,\ldots,x_d]$ such that for every $q$ and $\alpha$
$$P^{(\alpha)}(z_q):=\frac{\partial^{\vert \alpha \vert} P}{(\partial x_1)^{\alpha_1} \cdots (\partial x_d)^{\alpha_d}}(z_q)=c_{q,\alpha}.$$ 
\end{thm}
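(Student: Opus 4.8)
The plan is to reduce the problem to a single interpolation node and then patch the single-node solutions together. First I would observe that the statement can only hold when the points $z_q$ are pairwise distinct (otherwise the prescribed data $c_{q,\alpha}$ may be contradictory), so I would assume this throughout. For a single node $z_q$ the problem is immediate: the truncated Taylor polynomial
$$T_q(x) = \sum_{|\alpha| \le k_q} \frac{c_{q,\alpha}}{\alpha!}\,(x - z_q)^\alpha, \qquad \alpha! := \alpha_1! \cdots \alpha_d!,$$
satisfies $T_q^{(\alpha)}(z_q) = c_{q,\alpha}$ for all $|\alpha| \le k_q$ by the elementary rule for differentiating monomials. The difficulty is that $T_q$ also contributes nonzero derivatives at the other nodes, so the naive sum $\sum_q T_q$ fails to work.

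To decouple the nodes I would construct, for each $q$, a \emph{separating} polynomial $w_q \in \Pi^d$ with $w_q(z_q) = 1$ that vanishes to high order at every other node. Concretely, for each pair $q \ne q'$ choose a vector $a_{q,q'} \in \RR^d$ with $\langle a_{q,q'}, z_q - z_{q'} \rangle \ne 0$ (possible since $z_q \ne z_{q'}$) and set
$$w_q(x) = \prod_{q' \ne q} \left( \frac{\langle a_{q,q'},\, x - z_{q'} \rangle}{\langle a_{q,q'},\, z_q - z_{q'} \rangle} \right)^{N}, \qquad N := 1 + \max_{q'} k_{q'}.$$
Each factor equals $1$ at $z_q$, so $w_q(z_q) = 1$, while the factor indexed by $q'$ vanishes to order $N > k_{q'}$ at $z_{q'}$; hence $w_q$, and therefore every product $w_q R_q$ with $R_q \in \Pi^d$, has all derivatives of order $\le k_{q'}$ equal to zero at each $z_{q'}$ with $q' \ne q$.

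The final polynomial will have the form $P = \sum_{q=1}^m w_q R_q$, and by the vanishing just noted the interpolation conditions now \emph{decouple}: at node $z_q$ only the $q$th summand contributes, so it remains to choose each $R_q$ with $(w_q R_q)^{(\alpha)}(z_q) = c_{q,\alpha}$ for $|\alpha| \le k_q$. This is again a single-node problem, but weighted by $w_q$. Passing to Taylor jets of order $k_q$ at $z_q$ (that is, working in the quotient of $\Pi^d$ by the polynomials vanishing to order $k_q + 1$ at $z_q$), the class of $w_q$ is a unit because $w_q(z_q) = 1 \ne 0$. I would therefore take $R_q$ to be any polynomial whose degree-$k_q$ jet at $z_q$ equals the product of $T_q$ with the inverse of the jet of $w_q$; this forces the jet of $w_q R_q$ at $z_q$ to coincide with that of $T_q$, yielding the required derivative values and completing the construction.

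The main obstacle is the bookkeeping of the weighted single-node problem: one must argue cleanly that multiplication by $w_q$ is invertible on the finite-dimensional space of order-$k_q$ jets at $z_q$ and that the resulting jet is realized by an honest polynomial $R_q$. Both follow from the fact that, after translating $z_q$ to the origin, a polynomial with nonzero constant term is invertible in the formal power series ring, truncated at order $k_q + 1$. Verifying the order-of-vanishing estimate for $w_q$ and assembling the jets are the only places requiring care, but they reduce to routine multi-index algebra. The passage to the polynomial vector space $\RR[x_1,\ldots,x_d]$ needed later in Theorem \ref{thmmain} is automatic, since $P$ is produced as a polynomial.
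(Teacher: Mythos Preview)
Your argument is correct. You build explicit separating polynomials $w_q$ that are $1$ at $z_q$ and vanish to order exceeding $k_{q'}$ at every other node, then solve the residual one-node problem by inverting the jet of $w_q$ in the local Artinian ring $\Pi^d/I_{z_q}^{k_q+1}$; each step is sound.

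The paper takes a different, more compressed route: it observes that the ideals $I_{z_q}^{k_q+1}$ are pairwise coprime (because for $z_q\ne z_{q'}$ one can already separate them by univariate powers $(x_i-z_{q,i})^{k_q+1}$ and $(x_i-z_{q',i})^{k_{q'}+1}$ in a coordinate where they differ), and then invokes the Chinese Remainder Theorem to get a single $P$ whose class in each $\Pi^d/I_{z_q}^{k_q+1}$ is the prescribed Taylor polynomial. Your construction is in effect a hand-built proof of that surjectivity: the $w_q$ play the role of the CRT ``idempotents'' (congruent to a unit modulo one ideal and to $0$ modulo the others), and your jet-inversion step is exactly correcting that unit to $1$. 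The paper's version is shorter and yields the statement in two lines once CRT is granted; yours is more explicit and closer in spirit to the classical Lagrange--Hermite basis construction, which has the minor advantage that one sees concretely what $P$ looks like. Either approach suffices for the application in Theorem~\ref{vecthmmain}, where only existence (not a degree bound) is used.
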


It can also be shown that $P$ can be chosen so that its total degree is less than $\sum_{q=1}^m (k_q+1)$, see \cite[Theorem 19]{lor}, but we will not need this in the sequel.

For the sake of completeness we provide a short algebraic proof of Theorem \ref{lorenz19} which is based on the Chinese Remainder Theorem.

\begin{proof}
For each point $a=(a_1,\ldots,a_d)$ write $I_a$ for the ideal in $\Pi^d$ generated by $x_1-a_1,\ldots,x_d-a_d$.
Let us show that for any points $a \ne b$ and any $k,l \in \NN$, the ideals $I_a^k$ and $I_b^l$ are coprime.
Pick $i$ such that $a_i \ne b_i$ and note that $(x_i-a_i)^k$ and $(x_i-b_i)^l$ are relatively prime in $\RR[x_i]$.
It follows that $1=p(x_i) (x_i-a_i)^k+q(x_i) (x_i-b_i)^l \in I_a^k+I_b^l$ for suitable $p,q$.
Let $k_q$ and $z_q$ for $q=1,\ldots,m$ be as in the formulation of the theorem. We have just proved that 
the ideals $I_{z_q}^{k_q+1}$, $q=1,\ldots,m$, are pairwise coprime. By the Chinese Remainder Theorem
\cite[Proposition 1.10]{atiyah}, the canonical mapping $\Pi^d \to \prod_{q=1}^m \Pi^d/I_{z_q}^{k_q+1}$ is onto
(and its kernel is $\prod_{q=1}^m I_{z_q}^{k_q+1}=\bigcap_{q=1}^m I_{z_q}^{k_q+1}$). Therefore, there is
a polynomial $P \in \Pi^d$ such that for every $q=1,\ldots,m$,
$$P \equiv \sum\limits_{\alpha \in \NN_0^d, \vert \alpha \vert \le k_q} c_{q,\alpha} (x-z_q)^\alpha \mod I_{z_q}^{k_q+1} $$
\end{proof}

The following observation is well-known.

\begin{lem}
\label{points}
Functions $f_1,\ldots,f_k \in \C(U)$ are linearly independent iff there exist points $z_1,\ldots,z_k \in U$ such that $\det [f_i(z_j)]_{i,j=1,\ldots,k} \ne 0$.
\end{lem}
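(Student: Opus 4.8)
The plan is to prove both implications, the reverse one (nonvanishing determinant implies linear independence) by a direct argument and the forward one (linear independence implies existence of good points) by induction on $k$.

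The reverse implication is immediate and requires no continuity. Suppose $f_1,\ldots,f_k$ are linearly dependent, say $\sum_{i} \lambda_i f_i = 0$ with some $\lambda_i \ne 0$. Then for any choice of points $z_1,\ldots,z_k \in U$ the rows of the matrix $[f_i(z_j)]_{i,j}$ satisfy the same nontrivial relation $\sum_i \lambda_i f_i(z_j) = 0$ for each $j$, so the matrix is singular and its determinant vanishes. Contrapositively, the existence of points with $\det[f_i(z_j)]_{i,j} \ne 0$ forces $f_1,\ldots,f_k$ to be linearly independent.

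For the forward direction I would induct on $k$. The base case $k=1$ merely says that a nonzero continuous function takes a nonzero value at some point of $U$. Assuming the claim for $k-1$, let $f_1,\ldots,f_k$ be linearly independent; then $f_1,\ldots,f_{k-1}$ are linearly independent as well, so by the inductive hypothesis there are points $z_1,\ldots,z_{k-1} \in U$ with $\det[f_i(z_j)]_{i,j=1,\ldots,k-1} \ne 0$. Fixing these points and letting the last column vary, I would consider the function $g \colon U \to \RR$ given by
$$ g(z) = \det \begin{pmatrix} f_1(z_1) & \cdots & f_1(z_{k-1}) & f_1(z) \\ \vdots & & \vdots & \vdots \\ f_k(z_1) & \cdots & f_k(z_{k-1}) & f_k(z) \end{pmatrix}. $$
Expanding along the last column exhibits $g$ as a linear combination $g = \sum_{i=1}^{k} c_i f_i$, where each $c_i$ is a signed $(k-1)\times(k-1)$ minor; in particular the coefficient of $f_k$ is exactly $\det[f_i(z_j)]_{i,j=1,\ldots,k-1}$, which is nonzero by construction.

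The key point, and the only place where linear independence of all $k$ functions is used, is that because $c_k \ne 0$ the combination defining $g$ is nontrivial, so $g$ is not the zero function. Hence there exists a point $z_k \in U$ with $g(z_k) \ne 0$, and for this choice of $z_1,\ldots,z_k$ the full $k \times k$ determinant is nonzero, completing the induction. I do not expect any serious obstacle here: the argument is essentially algebraic, using continuity only through the trivial fact that a nonzero function is nonzero somewhere.
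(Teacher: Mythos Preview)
Your argument is correct. The paper itself omits the proof of this lemma as well-known, but your induction-plus-cofactor-expansion argument is exactly the method the paper uses to prove the vector-valued generalization (Lemma~\ref{vecpoints}): assume the $(k-1)$-minor is nonzero, let the remaining point vary, expand the $k\times k$ determinant along the new row/column, and observe that the coefficient of the last function is the nonzero $(k-1)$-minor, so linear independence forces the determinant to be nonvanishing at some point. One small remark: the ``trivial fact that a nonzero function is nonzero somewhere'' does not actually use continuity either, so the lemma holds for arbitrary real-valued functions on $U$.
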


We will need a generalization of Lemma \ref{points} to $n$-tuples of functions.

\begin{lem}
\label{vecpoints}
For every $\f_1,\ldots,\f_r \in \C(U)^n$ the following are equivalent:
\begin{enumerate}
\item $\f_1,\ldots,\f_r$ are linearly independent.
\item There exist points $z_1,\ldots,z_r \in U$ and indices $i_1,\ldots,i_r \in \{1,\ldots,n\}$ such that
$\det[ f_{l,i_k} (z_k) ]_{k,l=1,\ldots,r} \ne 0$.
\item There exist points $z_1,\ldots,z_r \in U$ such the block matrix $[\f_l(z_k]]_{k,l=1,\ldots,r}$ has maximal rank.
\end{enumerate}
\end{lem}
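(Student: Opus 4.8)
The plan is to prove the cyclic chain $(1)\Rightarrow(2)\Rightarrow(3)\Rightarrow(1)$, generalizing the argument behind Lemma~\ref{points} (to which it reduces when $n=1$). Two of the three implications are purely formal, and essentially the whole content of the lemma sits in $(1)\Rightarrow(2)$.

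For the formal implications I would argue as follows. View the block matrix $[\f_l(z_k)]_{k,l=1,\ldots,r}$ as an $rn\times r$ matrix, with each block $\f_l(z_k)$ a column of length $n$, so that the row-blocks are indexed by $k$ and the columns by $l$; its maximal possible rank is $r$. For $(2)\Rightarrow(3)$, observe that the matrix $[f_{l,i_k}(z_k)]_{k,l}$ of~(2) is exactly the $r\times r$ submatrix obtained by keeping, in the $k$-th row-block, the single row with index $i_k$. A nonzero determinant of this submatrix forces the $r$ columns of the big matrix to be linearly independent, i.e.\ rank $r$, which is~(3). For $(3)\Rightarrow(1)$, suppose $\sum_l c_l\f_l=0$ in $\C(U)^n$; evaluating at the points $z_1,\ldots,z_r$ of~(3) gives $\sum_l c_l\f_l(z_k)=0$ for each $k$, i.e.\ a linear relation among the columns of the block matrix. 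Maximal rank makes those columns independent, so all $c_l=0$ and $\f_1,\ldots,\f_r$ are linearly independent.

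The main step is $(1)\Rightarrow(2)$, which I would handle by a duality argument. Under~(1) the span $V:=\span\{\f_1,\ldots,\f_r\}$ is $r$-dimensional. For each $z\in U$ and index $i\in\{1,\ldots,n\}$ introduce the linear functional $\ell_{z,i}\colon V\to\RR$ sending $\f=(f_1,\ldots,f_n)$ to $f_i(z)$. These functionals separate the points of $V$: if $\ell_{z,i}(\f)=0$ for all $z$ and $i$, then every component of $\f$ vanishes identically, so $\f=0$. Hence their common kernel is $\{0\}$, and since $V$ is finite-dimensional the family $\{\ell_{z,i}\}$ must span the dual space $V^*$. I would then extract a basis $\ell_{z_1,i_1},\ldots,\ell_{z_r,i_r}$ of $V^*$; evaluating it on the basis $\f_1,\ldots,\f_r$ of $V$ yields the matrix $[\ell_{z_k,i_k}(\f_l)]_{k,l}=[f_{l,i_k}(z_k)]_{k,l}$, which is a transition matrix between two bases and is therefore invertible, giving the nonzero determinant required in~(2).

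The one delicate point, and the step I expect to be the main obstacle, is the passage from ``$\{\ell_{z,i}\}$ has trivial common kernel'' to ``$\{\ell_{z,i}\}$ contains a basis of $V^*$''. This is exactly where the finite-dimensionality of $V$ forced by~(1) is indispensable, and it is the precise generalization of the scalar fact underlying Lemma~\ref{points}; everything else is bookkeeping with submatrices and evaluations.
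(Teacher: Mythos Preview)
Your proof is correct. The passage you flag as delicate is genuinely fine: in a finite-dimensional $V$, the common kernel of a family of functionals equals the annihilator of their span in $V^*$, so trivial common kernel forces the family to span $V^*$, and any spanning set of the finite-dimensional $V^*$ contains a basis. The rest is indeed bookkeeping.

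Your argument for $(1)\Rightarrow(2)$ is, however, different from the paper's. The paper argues by induction on $r$: assuming the result for $\f_1,\ldots,\f_{r-1}$, it obtains $z_1,\ldots,z_{r-1}$ and $i_1,\ldots,i_{r-1}$ with nonzero $(r-1)\times(r-1)$ determinant, then for each choice of $(i,z)$ it appends the row $[f_{1,i}(z),\ldots,f_{r,i}(z)]$ to form an $r\times r$ matrix $A_{i,z}$; if every $\det A_{i,z}$ vanished, the cofactor expansion along that last row would produce a single nontrivial relation $\sum_l c_l \f_l(z)=0$ valid for all $z$, with $c_r$ equal to the nonzero inductive determinant, contradicting (1). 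Your duality approach is more conceptual and arguably cleaner, packaging the whole step into ``point-evaluations span $V^*$''; the paper's inductive cofactor argument is more hands-on and avoids any mention of dual spaces, and is closer in spirit to the classical proof of Lemma~\ref{points}. Both are short and either would serve.
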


\begin{proof}
Clearly, (2) implies (3) and (3) implies (1). We will prove that (1) implies (2) by induction on $r$.

If (1) is true for $r=1$ then $\f_1 \ne 0$. Thus there exists $z_1 \in U$ such that $\f_1(z_1) \ne 0$. This implies (2).

If (1) is true for $\f_1,\ldots,\f_r$ then it is also true for $\f_1,\ldots,\f_{r-1}$. By induction hypothesis, there exist points $z_1,\ldots,z_{r-1} \in U$ and 
indices $i_1,\ldots,i_{r-1} \in \{1,\ldots,n\}$ such that $$\det[ f_{l,i_k} (z_k) ]_{k,l=1,\ldots,r-1} \ne 0.$$ 
Consider the matrices
$$A_{i,z}:=\left[ \begin{array}{ccc} f_{1,i_1}(z_1) & \ldots & f_{r,i_1}(z_1) \\ \vdots & & \vdots \\ 
f_{1,i_{r-1}}(z_{r-1}) & \ldots & f_{r,i_{r-1}}(z_{r-1}) \\ f_{1,i}(z) & \ldots & f_{r,i}(z) \end{array} \right].$$
If (2) is false then $\det A_{i,z}=0$ for all $i$ and all $z$. By expanding $\det A_{i,z}$ along the last row, we get
the $i$-th row of the equation $$c_1 \f_1(z)+\ldots+c_r \f_r(z)=0$$
where $c_r = \det[ f_{l,i_k} (z_k) ]_{k,l=1,\ldots,r-1} \ne 0$. This contradicts (1).
 Thus (2) is true.
\end{proof}

\section{Matrices of linear partial differential operators}
\label{sec3}

The aim of this section is to prove a generalization of Theorem \ref{thmmain} to matrices of linear partial differential operators.

Let $\LL_1,\ldots,\LL_r$ be $m \times n$ matrices of partial differential operators with continuous coefficients. 
Let $s$ be the maximum of total degrees of all entries of all matrices and let $I_s := \{ \alpha \in \NN_0^d \colon \mid \alpha \mid \le s\}$.
Then we can write 
$$\LL_i=\sum_{\alpha \in I_s} \mathbf{P}_{i,\alpha} D^{\alpha}, \quad i=1,\ldots,r$$ where $\mathbf{P}_{i,\alpha} \in \C(U)^{m \times n}$ 
and $D^\alpha=\frac{\partial^{\vert \alpha \vert} P}{(\partial x_1)^{\alpha_1} \cdots (\partial x_d)^{\alpha_d}}$.

\begin{thm}
\label{vecthmmain}
Let $\LL_1,\ldots,\LL_r$ be as above. The following are equivalent:
\begin{enumerate}
\item For every $\ub \in \C_c^{(\infty)}(U)^n$ we have that $\LL_1 \ub,\ldots,\LL_r \ub$ are linearly dependent.
\item $\LL_1,\ldots,\LL_r$ are linearly dependent.
\end{enumerate}
\end{thm}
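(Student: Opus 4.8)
The plan is to dispatch the trivial implication (2)$\Rightarrow$(1) at once and to prove (1)$\Rightarrow$(2) by contraposition, following the bundling-plus-interpolation strategy for which the auxiliary results were prepared. For (2)$\Rightarrow$(1): if $\sum_i c_i\LL_i=0$ with some $c_i\neq 0$, then $\sum_i c_i\LL_i\ub=0$ for every $\ub\in\C_c^{(\infty)}(U)^n$, so $\LL_1\ub,\ldots,\LL_r\ub$ are linearly dependent.

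For (1)$\Rightarrow$(2) I would assume $\LL_1,\ldots,\LL_r$ are linearly \emph{in}dependent and manufacture a single field $\ub$ for which $\LL_1\ub,\ldots,\LL_r\ub$ are linearly independent, contradicting (1). First bundle all coefficients: writing $\LL_l=\sum_{\alpha\in I_s}\mathbf{P}_{l,\alpha}D^\alpha$, I collect the scalar functions $(\mathbf{P}_{l,\alpha})_{a,b}$, taken over all $\alpha\in I_s$, $a\in\{1,\ldots,m\}$, $b\in\{1,\ldots,n\}$, into a single vector $\f_l\in\C(U)^N$ with $N=m\,n\,|I_s|$. Since $\sum_l c_l\LL_l=0$ is equivalent to $\sum_l c_l\mathbf{P}_{l,\alpha}=0$ for all $\alpha$, linear independence of the operators $\LL_l$ is exactly linear independence of the $\f_l$. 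Lemma \ref{vecpoints} then yields points $z_1,\ldots,z_r\in U$ and index triples $(a_k,\alpha_k,b_k)$ with $\det[(\mathbf{P}_{l,\alpha_k})_{a_k,b_k}(z_k)]_{k,l=1}^r\neq 0$.

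The decisive step, and the one for which the interpolation is needed, is to realize this determinant as $\det[(\LL_l\ub)_{a_k}(z_k)]_{k,l}$ for a suitable $\ub$. Because $(\LL_l\ub)_{a}(z)=\sum_{\alpha,b}(\mathbf{P}_{l,\alpha})_{a,b}(z)\,(D^\alpha\u_b)(z)$, I want a field $\ub=(\u_1,\ldots,\u_n)$ whose jets at $z_k$ isolate the single pair $(\alpha_k,b_k)$, i.e. $(D^\alpha\u_b)(z_k)=1$ when $(\alpha,b)=(\alpha_k,b_k)$ and $0$ for every other $\alpha\in I_s$, $b$. Provided the points are pairwise \emph{distinct}, this is a consistent prescription, so Theorem \ref{lorenz19} produces each $\u_b$ as a polynomial with the prescribed derivatives up to order $s$ at $z_1,\ldots,z_r$ (this settles the polynomial-field version directly); multiplying by a cut-off $\chi\in\C_c^{(\infty)}(U)$ that is $\equiv 1$ near the finite set $\{z_1,\ldots,z_r\}$ preserves these jet values and places $\ub$ in $\C_c^{(\infty)}(U)^n$. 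With such $\ub$ one gets $(\LL_l\ub)_{a_k}(z_k)=(\mathbf{P}_{l,\alpha_k})_{a_k,b_k}(z_k)$, so $[(\LL_l\ub)_{a_k}(z_k)]_{k,l}$ is the nonsingular coefficient matrix; applying the implication (2)$\Rightarrow$(1) of Lemma \ref{vecpoints} to $\LL_1\ub,\ldots,\LL_r\ub\in\C(U)^m$ (points $z_k$, row indices $a_k$) shows these are linearly independent, the desired contradiction.

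The main obstacle is exactly the distinctness of the points: if two $z_k$ coincided, the Hermite conditions at that point would collide and a term wanted in one row would leak into another, so no single jet can isolate two distinct pairs at one point. I therefore expect to strengthen Lemma \ref{vecpoints} so that the $z_k$ may be taken pairwise distinct, at the cost of one density argument inserted into its inductive proof: in the induction step the cofactor expansion gives $\det A_{i,z}=\sum_l c_l f_{l,i}(z)$ with $c_r\neq 0$ and the $c_l$ independent of the new component index $i$ and the new point $z$; were this to vanish for all $i$ and all $z\in U\setminus\{z_1,\ldots,z_{r-1}\}$, it would force $\sum_l c_l\f_l\equiv 0$ on the dense set $U\setminus\{z_1,\ldots,z_{r-1}\}$, hence on all of $U$ by continuity, contradicting linear independence; thus a new point $z_r$ distinct from the previous ones is always available. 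This refinement is the only genuinely new ingredient beyond the scalar argument.
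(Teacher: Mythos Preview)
Your proof is correct and follows the same overall architecture as the paper --- Hermite interpolation to build a single test field $\ub$, Lemma~\ref{vecpoints} to witness independence through a pointwise determinant, and a cut-off to land in $\C_c^{(\infty)}(U)^n$ --- but the organization is genuinely different and in fact more economical.

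The paper does not bundle. It treats each pair $(\alpha,k)$ separately: for each such pair it extracts a basis $\{\p_{j,\alpha,k}:j\in J_{\alpha,k}\}$ of $\span\{\p_{1,\alpha,k},\ldots,\p_{r,\alpha,k}\}$, applies Lemma~\ref{vecpoints} block by block to get points $z_{i,\alpha,k}$, and then interpolates so that $\u_k^{(\alpha)}(z_{j,\beta,l})=\delta_{k,l}\delta_{\alpha,\beta}$. This produces a block-diagonal evaluation matrix $Q=\bigoplus_{(\alpha,k)}Q_{\alpha,k}$ of size $|\Lambda|\times|\Lambda|$ with $|\Lambda|=\sum_{\alpha,k}|J_{\alpha,k}|$, from which the paper first deduces that the $|\Lambda|$ vectors $\p_{i,\alpha,k}\u_k^{(\alpha)}$ are linearly independent in $\C(U)^m$, and only then transfers a dependence $\sum_i c_i\LL_i\ub=0$ back to $\sum_i c_i\LL_i=0$ by rewriting in this basis. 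The distinctness issue arises only across different $(\alpha,k)$-blocks, and the paper handles it by the same continuity/restriction observation you use.

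Your bundling of all coefficient entries into a single $\f_l\in\C(U)^{mn|I_s|}$ lets you invoke Lemma~\ref{vecpoints} once, get exactly $r$ points and $r$ index triples, and realize the resulting $r\times r$ determinant directly as $\det[(\LL_l\ub)_{a_k}(z_k)]$. The price is that you genuinely need all $r$ points pairwise distinct, which is why you must sharpen Lemma~\ref{vecpoints}; your density/continuity argument for this refinement is correct. What you gain is a shorter proof with fewer interpolation nodes and no intermediate basis extraction; what the paper's decomposition buys is that the block structure makes the rank of $Q$ transparent without any refinement of the lemma.
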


\begin{proof}
Clearly, (2) implies (1). To show that (1) implies (2) we have to pick first a suitable $\ub$.
Write $$\mathbf{P}_{i,\alpha}=\left[ \p_{i,\alpha,1},\ldots,\p_{i,\alpha,n} \right]$$ for each $i$ and $\alpha$.
For each $\alpha$ and $k$ write $V_{\alpha,k}=\span\{\p_{1,\alpha,k},\ldots,\p_{r,\alpha,k}\}$. Pick a subset
$J_{\alpha,k}$ of $\{1,\ldots,r\}$ such that $\{\p_{j,\alpha,k} \mid j \in J_{\alpha,k}\}$ is a basis of $V_{\alpha,k}$.
By Lemma \ref{vecpoints} there exist points $z_{i,\alpha,k} \in \RR^d$, $i \in J_{\alpha,k}$ such that the matrix
$$Q_{\alpha,k}:=[\p_{j,\alpha,k}(z_{i,\alpha,k})]_{i,j \in J_{\alpha,k}}$$ has maximal rank.

We can choose $z_{i,\alpha,k}$ so that $z_{i,\alpha,k} \ne z_{j,\beta,l}$ if $(\alpha,k) \ne (\beta,l)$.
This follows from a simple observation that elements of $\C(U)^n$ are linearly independent if and only if 
their restrictions to $U \setminus Z$, where $Z$ is a finite set, are linearly independent. 
We can therefore assume that all zeroes $z_{i,\alpha,k}$ are pairwise distinct and indexed by the set
$$\Lambda:=\{(i,\alpha,k) \mid k=1,\ldots,n, \, \alpha \in I_s,\, i \in J_{\alpha,k}\}.$$

By Theorem \ref{lorenz19}, there exist polynomials $\u_1,\ldots,\u_n \in \RR[x_1,\ldots,x_d]$ such that 
$$\u_k^{(\alpha)}(z_{j,\beta,l})=\delta_{k,l}\delta_{\alpha,\beta}=\left\{ \begin{array}{cc} 1 & (\alpha,k)=(\beta,l) \\ 
0 & (\alpha,k) \ne (\beta,l) \end{array} \right.$$
for each $k=1,\ldots,n$, each $\alpha \in I_s$ and each $(j,\beta,l) \in \Lambda$.

We can chosse two balls around each $z_{i,\alpha,k}$, $(i,\alpha,k) \in \Lambda$, both contained in $U$ 
and an infinitely differentiable function $h$ which is equal to $1$ on the union of smaller balls and equal to $0$ 
oustide the union of the bigger balls. Replacing $\u_1,\ldots,\u_n$ with $\u_1 h,\ldots,\u_n h$ we may assume that
$\u_1,\ldots,\u_n \in \C_c^{(\infty)}(U)$ and still satisfy $\u_k^{(\alpha)}(z_{j,\beta,l})=\delta_{k,l}\delta_{\alpha,\beta}$.

Consider the matrix 
$$Q:= [ \p_{i, \alpha,k} (z_{j,\beta,l}) \u_k^{(\alpha)}(z_{j,\beta,l}) ]_{(i,\alpha,k),(j,\beta,l) \in \Lambda}.  $$
By the choice of $\u_1,\ldots,\u_n$ we have that 
$$Q= [ \p_{i, \alpha,k} (z_{j,\beta,l}) \delta_{k,l}\delta_{\alpha,\beta}) ]_{(i,\alpha,k),(j,\beta,l) \in \Lambda}=\bigoplus\limits_{(\alpha,k)} Q_{\alpha,k}$$ 
where $Q_{\alpha,k}$ are defined above. Since $Q_{\alpha,k}$ have maximal rank, so has $Q$. By Lemma \ref{vecpoints} it follows that the vectors 
$$\p_{i, \alpha,k}  \u_k^{(\alpha)} \in \C(U)^n, \quad (i,\alpha,k) \in \Lambda$$
are linearly independent.

Write $\ub=[\u_1,\ldots,\u_n]^T$ and pick $c_1,\ldots,c_r \in \RR$, at least one nonzero, such that
$$ 0=c_1 \LL_1 \ub+\ldots+c_r \LL_r \ub.$$
It follows that $$0=\sum_{i=1}^r c_i \, \LL_i \ub=\sum_{i=1}^r c_i \sum_{\alpha \in I_s} \mathbf{P}_{i,\alpha} D^\alpha \ub
= \sum_{i=1}^r c_i \sum_{\alpha \in I_s} \sum_{k=1}^n \p_{i,\alpha,k} \u_k ^{(\alpha)}=$$
$$=\sum_{\alpha \in I_s} \sum_{k=1}^n (\sum_{i=1}^r c_i\p_{i,\alpha,k}) \u_k ^{(\alpha)}=
\sum_{\alpha \in I_s} \sum_{k=1}^n (\sum_{i \in J_{\alpha,k}} d_{i,\alpha,k} \p_{i,\alpha,k}) \u_k ^{(\alpha)}=$$
$$=\sum_{(i,\alpha,k) \in \Lambda}d_{i,\alpha,k} \p_{i,\alpha,k} \u_k ^{(\alpha)}.$$
Since $\p_{i, \alpha,k}  \u_k^{(\alpha)}$ are linearly independent, it follows that $d_{i,\alpha,k}=0$ for all $(i,\alpha,k)$.
Therefore $$\sum_{i=1}^r c_i\p_{i,\alpha,k}=\sum_{i \in J_{\alpha,k}} d_{i,\alpha,k} \p_{i,\alpha,k}=0$$
for all $(\alpha,k)$ which implies that
$$\sum_{i=1}^r c_i \LL_i=\sum_{i=1}^r c_i \sum_{\alpha \in I_s} \mathbf{P}_{i,\alpha} D^\alpha
= \sum_{\alpha \in I_s} \sum_{i=1}^r c_i \mathbf{P}_{i,\alpha} D^\alpha=$$
$$=\sum_{\alpha \in I_s} \sum_{i=1}^r c_i[\p_{i,\alpha,1},\ldots,\p_{i,\alpha,n}] D^\alpha
=\sum_{\alpha \in I_s} [\sum_{i=1}^r c_i\p_{i,\alpha,1},\ldots,\sum_{i=1}^r c_i\p_{i,\alpha,n}] D^\alpha=0.$$
\end{proof}

\section{A weak nullstellensatz for matrices of differential operators}
\label{sec4}

The aim of this section is to prove Theorem \ref{vecweaknsatz} which is a generalization of Corollary \ref{weaknsatz} to matrices of partial differential operators.
Theorem \ref{vecweaknsatz} can be considered as a weak analogue of Theorem \ref{vecstrongnsatz} below. 

\begin{thm}
\label{vecweaknsatz}
Let $\LL_1,\ldots,\LL_r$ and $\LL$ be $m \times n$ matrices of linear partial differential operators with coefficients from $\C(U)$ where $U$ is an open subset of $\RR^d$.
The following are equivalent:
\begin{enumerate}
\item For every $\ub \in \C^{(\infty)}_c(U)^n$ and every  $\uub \in \C^{(\infty)}_c(U)^m$ such that $\langle \LL_1 \ub, \uub \rangle=\ldots=\langle \LL_r \ub,\uub \rangle=0$
we have that $\langle \LL \ub, \uub \rangle=0$. (Here $\C_c(U)^m$ has standard inner product $\langle \f,\g \rangle=\sum_{i=1}^m \int_U f_i g_i$.)
\item For every $\ub \in \C^{(\infty)}_c(U)^n$ we have that $\LL \ub \in \span\{\LL_1 \ub,\ldots,\LL_r \ub\}$.
\item $\LL \in \span\{\LL_1,\ldots,\LL_r\}$.
\end{enumerate}
\end{thm}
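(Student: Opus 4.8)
The plan is to prove the cycle of implications (3) $\Rightarrow$ (2) $\Rightarrow$ (1) $\Rightarrow$ (3), where the first two implications are essentially formal and the last one is the substantive step that will rely on Theorem \ref{vecthmmain}.

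First I would handle (3) $\Rightarrow$ (2): if $\LL=\sum_{i=1}^r c_i \LL_i$ for some scalars $c_i$, then for any $\ub$ we have $\LL\ub=\sum_{i=1}^r c_i (\LL_i\ub)$, so $\LL\ub$ lies in $\span\{\LL_1\ub,\ldots,\LL_r\ub\}$ directly from the definition. Next, (2) $\Rightarrow$ (1) is also immediate: if $\LL\ub=\sum_{i=1}^r c_i(\ub)\,\LL_i\ub$ for some scalars $c_i(\ub)$ depending on $\ub$, then for any $\uub$ with $\langle\LL_i\ub,\uub\rangle=0$ for all $i$ we get $\langle\LL\ub,\uub\rangle=\sum_i c_i(\ub)\langle\LL_i\ub,\uub\rangle=0$. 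Both of these reverse directions require no hard analysis.

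The main work is (1) $\Rightarrow$ (3). The strategy is to show that (1) forces, for every fixed $\ub\in\C^{(\infty)}_c(U)^n$, that $\LL\ub\in\span\{\LL_1\ub,\ldots,\LL_r\ub\}$; that is, to recover condition (2) from condition (1). The key tool is a nondegeneracy/separation argument: I would fix $\ub$ and consider the finite-dimensional span $W=\span\{\LL_1\ub,\ldots,\LL_r\ub\}\subseteq\C(U)^m$. If $\LL\ub\notin W$, then by a Hahn--Banach type separation within the relevant finite-dimensional space I expect to produce a function $\uub\in\C^{(\infty)}_c(U)^m$ that is orthogonal (in the given inner product) to every element of $W$ but not orthogonal to $\LL\ub$, contradicting (1). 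The density of $\C^{(\infty)}_c(U)^m$ test functions and Lemma \ref{vecpoints} (which characterizes linear independence of tuples via evaluation at finitely many points) should together guarantee that the inner-product pairing is nondegenerate enough to realize such a separating $\uub$ as a smooth compactly supported function rather than merely an abstract functional.

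Once (1) has been upgraded to (2) in this way, I would invoke Theorem \ref{vecthmmain} applied to the enlarged family $\LL_1,\ldots,\LL_r,\LL$: condition (2) says precisely that for every $\ub$ the tuple $\LL_1\ub,\ldots,\LL_r\ub,\LL\ub$ is linearly dependent, i.e.\ the family of $r+1$ operators is locally linearly dependent, so by Theorem \ref{vecthmmain} it is linearly dependent as operators. This yields a nontrivial relation $c_0\LL+\sum_{i=1}^r c_i\LL_i=0$; I then need $c_0\neq 0$ to conclude $\LL\in\span\{\LL_1,\ldots,\LL_r\}$. The delicate point, and the step I expect to be the main obstacle, is guaranteeing $c_0\neq 0$: if the $\LL_i$ are already linearly dependent among themselves the relation could have $c_0=0$. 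I would resolve this by first passing to a maximal linearly independent subfamily of $\{\LL_1,\ldots,\LL_r\}$ (without loss of generality a basis of their span), so that any nontrivial relation involving $\LL$ must have nonzero coefficient on $\LL$; realizing the separating test function as genuinely smooth and compactly supported, rather than abstract, is the remaining technical care needed to make the separation argument in (1) $\Rightarrow$ (2) rigorous.
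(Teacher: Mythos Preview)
Your proposal is correct and follows essentially the same route as the paper: the trivial implications $(3)\Rightarrow(2)\Rightarrow(1)$, then $(1)\Rightarrow(2)$ by a separation/orthogonality argument for the finite-dimensional span $W=\span\{\LL_1\ub,\ldots,\LL_r\ub\}$, and finally $(2)\Rightarrow(3)$ by applying Theorem~\ref{vecthmmain} to $\LL_1,\ldots,\LL_r,\LL$ after passing to a linearly independent subfamily to force the coefficient on $\LL$ to be nonzero. The only difference is in the packaging of the separation step: the paper records the elementary fact $W^{\perp\perp}=W$ for finite-dimensional $W$ in any inner product space (Lemma~\ref{lemperp}) and reads off $V_{\ub}\subseteq U_{\ub}$ directly, rather than invoking Hahn--Banach, density, or Lemma~\ref{vecpoints}.
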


\begin{proof}
Clearly, (3) implies (2) and (2) implies (1). By Theorem \ref{thmmain}, (2) implies (3). Namely, without loss of generality,
we can assume that $\LL_1,\ldots,\LL_r$ are linearly independent. By (2), $\LL_1,\ldots,\LL_r,\LL$ are locally linearly dependent.

To show that (1) implies (2), pick $\ub \in \C(U)^n$ and note that the sets $U_{\ub}:=\span\{\LL_1 \ub,\ldots,\LL_r \ub\}$ and $V_{\ub}:=\span\{\LL \ub\}$
are finite-dimensional subspaces of $\C_c(U)^m$.
By (1), we have that $\uub \in U_{\ub}^\perp$ implies $\uub \in V_{\ub}^\perp$, i.e. $U_{\ub}^\perp \subseteq V_{\ub}^\perp$.
It follows that $U_{\ub}^{\perp \perp} \subseteq V_{\ub}^{\perp \perp}$. By Lemma \ref{lemperp} below, it follows that $V_{\ub} \subseteq U_{\ub}$.
\end{proof}

In the proof we used the following simple and well-known observation:

\begin{lem}
\label{lemperp}
If $V$ is an inner product space and $W$ is a finite-dimensional subspace of $V$ then $W^{\perp \perp}=W$.
\end{lem}

\begin{proof}
Let $e_1,\ldots,e_n$ be an orthonormal basis of $W$. For every element $v \in V$ write
$v'=\sum_{i=1}^n \langle v,e_i \rangle e_i$ and $v''=v-v'$. Clearly. $v' \in W$, $v'' \in W^\perp$ and $v=v'+v''$.
It follows that $V= W \oplus W^\perp$. Finally, if $v=v'+v'' \in W^{\perp\perp}$, then $\langle v, v'' \rangle =0$
which implies that $\langle v'', v'' \rangle =0$. Therefore $v=v' \in W$.
\end{proof}

Theorem \ref{vecstrongnsatz} can be considered as a strong nullstellensatz for matrices of linear partial differential operators:

\begin{thm}
\label{vecstrongnsatz}
Let $U$ be a nonempty open subset of $\RR^d$ and let $\LL_1,\ldots,\LL_r$ and $\LL$ be $m \times n$ matrices of linear partial differential operators with polynomial coefficients. The following are equivalent:
\begin{enumerate}
\item[(1)] Every $n$-tuple of convergent power series $\ub$ around every point of $U$ which satisfies $\LL_1 \ub=\ldots=\LL_r \ub=0$ also satisfies $\LL \ub=0$.
\item[(2)] There exists a nonzero polynomial $w$ and $m \times m$ matrices of linear partial differential operators $\H_1,\ldots,\H_r$ with polynomial coefficients such that $w \LL=\H_1 \LL_1 + \ldots +\H_r \LL_r$. 
\end{enumerate}
\end{thm}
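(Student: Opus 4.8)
The plan is to dispatch (2) $\Rightarrow$ (1) by an elementary integrality argument and to reduce (1) $\Rightarrow$ (2) to the analytic content of \cite{c1} after passing to the field of rational functions. I want to stress at the outset that, unlike the previous sections, the hard direction cannot be reached by the interpolation arguments used for Theorems \ref{thmmain} and \ref{vecthmmain}: the hypothesis here concerns \emph{all} convergent power series solutions of a system rather than pointwise linear dependence, and the conclusion asks for operator coefficients $\H_i$ (not merely real scalars), so genuinely analytic input is needed.

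For (2) $\Rightarrow$ (1), suppose $w\LL=\H_1\LL_1+\cdots+\H_r\LL_r$ with $w$ a nonzero polynomial. If $\ub$ is an $n$-tuple of convergent power series around a point $p\in U$ with $\LL_1\ub=\cdots=\LL_r\ub=0$, then $w(\LL\ub)=\sum_{i}\H_i(\LL_i\ub)=0$. The ring $\RR\{x_1-p_1,\ldots,x_d-p_d\}$ of convergent power series at $p$ is an integral domain in which the nonzero polynomial $w$ is nonzero, so every entry of the $m$-tuple $\LL\ub$ vanishes and $\LL\ub=0$.

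For (1) $\Rightarrow$ (2) I would first recast (2) as a module membership. Let $D=\RR[x_1,\ldots,x_d]\langle\partial_1,\ldots,\partial_d\rangle$ be the Weyl algebra, let $R\in D^{rm\times n}$ be the matrix whose rows are those of $\LL_1,\ldots,\LL_r$, and let $M=D^{1\times n}/D^{1\times rm}R$ be the left $D$-module it presents (here $D^{1\times rm}R$ denotes the left submodule spanned by the rows of $R$), so that for any left $D$-module $\F$ the solutions of $R\ub=0$ in $\F^n$ are identified with $\mathrm{Hom}_D(M,\F)$ by Malgrange--Oberst duality. Writing $\ell_1,\ldots,\ell_m\in D^{1\times n}$ for the rows of $\LL$, condition (2) is exactly the assertion that there is a single nonzero polynomial $w$ with $w\ell_i\in D^{1\times rm}R$ for all $i$; since $S=\RR[x_1,\ldots,x_d]\setminus\{0\}$ is an Ore set in $D$ with $D_K:=S^{-1}D=K\langle\partial_1,\ldots,\partial_d\rangle$, where $K=\RR(x_1,\ldots,x_d)$, this is equivalent to the vanishing of the classes of $\ell_1,\ldots,\ell_m$ in $M_K:=S^{-1}M$. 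Thus the denominator $w$ appearing in (2) is precisely the denominator incurred on passing to the generic point.

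It then remains to show that (1) forces each $\ell_i$ to vanish in $M_K$. Contrapositively, if the class of some row $\ell$ of $\LL$ is nonzero in $M_K$, I would produce a point $p\in U$ and a convergent power series solution $\ub$ of $R\ub=0$ with $\ell\cdot\ub\neq0$, whence $\LL\ub\neq0$, contradicting (1). This separation step is the crux: at points $p$ in a suitable nonempty Zariski-open subset of $U$ the symbol of the system has generic rank and one sits in a non-characteristic stratum, where the space of convergent power series solutions at $p$ is large enough to witness any nonzero element of $M_K$. This is exactly the analytic fundamental principle behind the nullstellensatz of \cite{c1}; in the one-variable case it collapses to the classical Wronskian fact that a nonzero operator cannot annihilate more linearly independent solutions than its order. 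I expect this construction of sufficiently many convergent solutions to be the main obstacle, and I would import it from \cite{c1}. Granting it, clearing the common denominator $w$ from the memberships $\ell_i\in D_K^{1\times rm}R$ valid over $K$ returns matrices $\H_1,\ldots,\H_r$ of operators with polynomial coefficients satisfying $w\LL=\H_1\LL_1+\cdots+\H_r\LL_r$, which is (2).
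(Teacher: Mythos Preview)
Your approach is correct and is essentially the paper's own proof, just phrased in module-theoretic language: the paper stacks $\LL_1,\ldots,\LL_r$ into a single matrix $\K$, observes that (1) says each row $\ll_i$ of $\LL$ satisfies the hypothesis of the $m=1$ case (which is exactly the nullstellensatz of \cite{c1}), obtains $w_i\ll_i=\h_i\K$ for each $i$, and then passes to a common $w$ and assembles the $\h_i$ into $\H_1,\ldots,\H_r$. Your Ore-localization framing is a clean way to see why a single denominator $w$ suffices, but the analytic input and the reduction to \cite{c1} are identical to the paper's.
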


\begin{proof}
The $m=1$ case of Theorem \ref{vecstrongnsatz} was proved in \cite{c1}. Write
$$\K=\left[ \begin{array}{c} \LL_1 \\ \vdots \\ \LL_r \end{array} \right] \quad \text{and} \quad \LL=\left[ \begin{array}{c} \ll_1 \\ \vdots \\ \ll_m \end{array} \right].$$
By assumption (1), $\K \ub=0$ implies $\ll_i \ub=0$ for every $i=1,\ldots,m$. By the $m=1$ case there exist row matrices $\h_1,\ldots,\h_m$ of linear partial 
differential operators and a nonzero polynomial $w$ such that $w \ll_i=\h_i K$  for every $i=1,\ldots,m$. It follows that
$$w \LL= \left[ \begin{array}{c} \h_1 K \\ \vdots \\ \h_m K \end{array} \right] = \left[ \begin{array}{c} \h_1 \\ \vdots \\ \h_m \end{array} \right] K =
\left[ \begin{array}{ccc} \H_1 & \ldots & \H_r \end{array} \right] K = \H_1 \LL_1 + \ldots +\H_r \LL_r.$$  
\end{proof}

\section{Free polynomials and Matrix Polynomials}
\label{sec5}

Let us survey analogues of our main results for free polynomials and matrix polynomials.

Theorem \ref{freemain} is an analogue of Theorem \ref{thmmain} for free polynomials.
It is a special case of \cite[Theorem 3.1 and Theorem 3.7]{bk} (applied to $\mathcal{A}=\bigcup_{n=1}^\infty M_n(\FF)$.)
See also \cite{chsy}. It would be interesting to have versions of these results for matrices over $\FF \langle x_1,\ldots,x_d \rangle$.

\begin{thm}
\label{freemain}
Let $\FF$ be a field. For any $p_1,\ldots,p_m$ from the free algebra $\FF \langle x_1,\ldots,x_d \rangle$ the following are equivalent.
\begin{enumerate}
\item For every $n \in \NN$, every $A_1,\ldots,A_d \in \FF^{n \times n}$ and every $v \in \FF^n$, we have that $p_i(A_1,\ldots,A_d)$v,
\ldots, $p_i(A_1,\ldots,A_d)v$ are linearly dependent.
\item For every $n \in \NN$ and every $A_1,\ldots,A_d \in \FF^{n \times n}$, we have that $p_i(A_1,\ldots,A_d)$,
\ldots, $p_i(A_1,\ldots,A_d)$ are linearly dependent.
\item $p_1,\ldots,p_m$ are linearly dependent.
\end{enumerate}
\end{thm}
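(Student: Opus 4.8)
The plan is to establish the cycle of implications $(3)\Rightarrow(2)\Rightarrow(1)\Rightarrow(3)$. The first two are immediate. If $\sum_i c_i p_i=0$ in $\FF\langle x_1,\ldots,x_d\rangle$ with the $c_i$ not all zero, then applying the evaluation homomorphism $x_j\mapsto A_j$ (which is $\FF$-linear) yields $\sum_i c_i\,p_i(A_1,\ldots,A_d)=0$ for every choice of matrices, giving $(2)$; multiplying such a matrix relation on the right by an arbitrary vector $v$ yields $\sum_i c_i\,p_i(A_1,\ldots,A_d)v=0$ with the same nonzero coefficients, giving $(1)$. So all the content lies in the hard implication $(1)\Rightarrow(3)$, which I will prove in contrapositive form: assuming $p_1,\ldots,p_m$ are linearly independent, I will exhibit a single dimension $n$, a single tuple of matrices, and a single vector $v$ for which $p_1(A)v,\ldots,p_m(A)v$ are linearly independent, thereby contradicting $(1)$.

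The key idea is to use a finite-dimensional truncation of the left regular representation of the free algebra. Let $N$ be the maximum of the degrees of $p_1,\ldots,p_m$, and let $W_N$ be the $\FF$-vector space whose basis is the set of all words of length at most $N$ in the letters $x_1,\ldots,x_d$, so that $n:=\dim_\FF W_N=\sum_{k=0}^N d^k$. For each $i$ I define the operator $A_i$ on $W_N$ on basis words by $A_i w=x_i w$ (left concatenation) when $|w|<N$ and $A_i w=0$ when $|w|=N$. Fixing an identification $W_N\cong\FF^n$ turns each $A_i$ into a matrix in $\FF^{n\times n}$, and I take $v$ to be the basis vector corresponding to the empty word.

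The crucial computation is that $u(A_1,\ldots,A_d)\,v=u$ for every word $u=x_{i_1}\cdots x_{i_k}$ of length $k\le N$, viewed as an element of $W_N$. Evaluating the operator product $A_{i_1}\cdots A_{i_k}$ on the empty word and working from the right rebuilds the word one letter at a time, and since the operator $A_{i_j}$ acts on a word of length $k-j\le k-1<N$, the truncation rule is never triggered. By linearity this gives $p_j(A_1,\ldots,A_d)\,v=p_j$ in $W_N$ for each $j$, where on the right $p_j$ is regarded as its coordinate vector in the word basis. Consequently the vectors $p_1(A)v,\ldots,p_m(A)v$ are linearly independent in $\FF^n$ exactly when $p_1,\ldots,p_m$ are linearly independent in the free algebra, which completes $(1)\Rightarrow(3)$.

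The main obstacle is nothing more than bookkeeping in this crucial computation: one must fix conventions carefully, using \emph{left} concatenation so that the regular action returns the word $x_{i_1}\cdots x_{i_k}$ itself rather than its reversal, and one must check that the degree bound $N$ keeps every intermediate step inside $W_N$ so that no truncation occurs. Once these conventions are pinned down, the argument uses no polynomial-identity theory and no hypothesis on $\FF$, so it works over an arbitrary field, as the statement requires.
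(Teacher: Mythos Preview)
Your proof is correct. The truncated left regular representation argument is standard and your bookkeeping is accurate: with $A_i$ acting by left concatenation and $v$ the empty word, one recovers $u(A)v=u$ for every word $u$ of length at most $N$, and linear independence transfers directly.

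The paper itself does not give a proof of this theorem; it merely records it as a special case of \cite[Theorems~3.1 and~3.7]{bk} applied to $\mathcal{A}=\bigcup_n M_n(\FF)$. Your argument is therefore genuinely different in presentation: rather than invoking the general local--global machinery of Bre\v{s}ar and Klep, you exhibit explicitly a single tuple $(A_1,\ldots,A_d)$ and vector $v$ that separates any linearly independent family, via the regular representation truncated at the maximal degree. This is more elementary and entirely self-contained (no appeal to outside results, no hypothesis on the field), at the cost of being tailored to the free algebra rather than to a general class of algebras as in the cited reference. For the purposes of this paper, where the theorem is only surveyed as an analogue of the main result, your direct construction is arguably preferable.
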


Corollary \ref{freeweaknsatz} is an analogue of Corollary \ref{weaknsatz} for free polynomials. It follows from Theorem \ref{freemain} 
by the same argument as in the proof of Theorem \ref{vecweaknsatz}.

\begin{cor}
\label{freeweaknsatz}
Let $\FF$ be a field. For any $p_1,\ldots,p_m,q \in \FF \langle x_1,\ldots,x_d \rangle$ the following are equivalent.
\begin{enumerate}
\item For every $n \in \NN$, every $A_1,\ldots,A_d \in \FF^{n \times n}$ and every $u,v \in \FF^d$ such that $\langle p_i(A_1,\ldots,A_d)u,v \rangle=0$ for $i=1,\ldots, m$
we have  $\langle q(A_1,\ldots,A_d)u,v \rangle=0$.
\item For every $n \in \NN$, every $A_1,\ldots,A_d \in \FF^{n \times n}$ and every $u \in \FF^d$ we have that $q(A_1,\ldots,A_d)u \in \span \{ p_i(A_1,\ldots,A_d)u \mid i=1,\ldots, m\}$.  
\item For every $n \in \NN$ and every $A_1,\ldots,A_d \in \FF^{n \times n}$, we have that $q(A_1,\ldots,A_d) \in \span \{ p_i(A_1,\ldots,A_d) \mid i=1,\ldots, m\}$.  
\item $q \in \span \{ p_i \mid i=1,\ldots, m\}$.  
\end{enumerate}
\end{cor}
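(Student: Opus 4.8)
The plan is to follow the proof of Theorem \ref{vecweaknsatz} essentially verbatim, with Theorem \ref{freemain} playing the role that Theorem \ref{thmmain} played there, and with the four conditions arranged into a single cycle of implications. First I would record that the implications (4) $\Rightarrow$ (3) $\Rightarrow$ (2) $\Rightarrow$ (1) are immediate. Indeed, if $q=\sum_i c_i p_i$ then $q(A_1,\ldots,A_d)=\sum_i c_i\, p_i(A_1,\ldots,A_d)$ for every substitution, giving (3); evaluating at a vector $u$ gives (2); and whenever all $\langle p_i(A_1,\ldots,A_d)u,v\rangle=0$, writing $q(A_1,\ldots,A_d)u$ as a combination of the $p_i(A_1,\ldots,A_d)u$ and pairing with $v$ forces $\langle q(A_1,\ldots,A_d)u,v\rangle=0$, giving (1). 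It then suffices to close the cycle by proving (1) $\Rightarrow$ (2) and (2) $\Rightarrow$ (4).

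For (2) $\Rightarrow$ (4) I would argue exactly as in the (2) $\Rightarrow$ (3) step of Theorem \ref{vecweaknsatz}. Without loss of generality $p_1,\ldots,p_m$ are linearly independent in $\FF\langle x_1,\ldots,x_d\rangle$: a linear relation among the $p_i$ persists under every substitution, so I may discard $p_i$ until only a basis of their span remains, and neither condition (2) nor the conclusion (4) is affected by this reduction since both spans $\span\{p_i(A_1,\ldots,A_d)u\}$ and $\span\{p_i\}$ are unchanged. Now (2) says that for every $n$, every $A_1,\ldots,A_d\in\FF^{n\times n}$ and every $u$, the vectors $p_1(A_1,\ldots,A_d)u,\ldots,p_m(A_1,\ldots,A_d)u,q(A_1,\ldots,A_d)u$ are linearly dependent. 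This is precisely condition (1) of Theorem \ref{freemain} applied to the $(m+1)$-tuple $p_1,\ldots,p_m,q$, so by that theorem $p_1,\ldots,p_m,q$ are linearly dependent. Since the $p_i$ were chosen independent, any such relation involves $q$ with a nonzero coefficient, whence $q\in\span\{p_1,\ldots,p_m\}$, which is (4).

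For (1) $\Rightarrow$ (2) I would fix $n$, matrices $A_1,\ldots,A_d$ and a vector $u$, and set $W:=\span\{p_1(A_1,\ldots,A_d)u,\ldots,p_m(A_1,\ldots,A_d)u\}\subseteq\FF^n$ and $V:=\span\{q(A_1,\ldots,A_d)u\}\subseteq\FF^n$. Condition (1) says exactly that $W^\perp\subseteq V^\perp$ for the standard bilinear form; taking orthogonal complements reverses the inclusion to $V^{\perp\perp}\subseteq W^{\perp\perp}$, and then $V=V^{\perp\perp}\subseteq W^{\perp\perp}=W$, i.e.\ $q(A_1,\ldots,A_d)u\in W$, which is (2). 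The hard part, and the only real point of care, is the identity $W^{\perp\perp}=W$ over an arbitrary field $\FF$: Lemma \ref{lemperp} is stated for inner product spaces and uses an orthonormal basis, which need not exist over general $\FF$, and the orthogonal splitting $\FF^n=W\oplus W^\perp$ can genuinely fail when $W$ contains isotropic vectors. However, I only need the result on the finite-dimensional space $\FF^n$ equipped with the standard bilinear form $\langle u,v\rangle=\sum_i u_iv_i$, which is nondegenerate over every field. For a nondegenerate bilinear form on a finite-dimensional space the map $x\mapsto\langle x,\cdot\rangle$ is an isomorphism onto the dual, under which $W^\perp$ is the annihilator of $W$; hence $\dim W^\perp=n-\dim W$, and applying this twice gives $\dim W^{\perp\perp}=\dim W$. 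Since $W\subseteq W^{\perp\perp}$ always holds, equality follows. Thus the dimension-count form of $W^{\perp\perp}=W$ survives the passage to general fields even though the orthogonal-basis argument does not, and this is exactly what the perp step requires.
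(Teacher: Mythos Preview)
Your proof is correct and follows exactly the route the paper indicates: the paper merely says that Corollary~\ref{freeweaknsatz} ``follows from Theorem~\ref{freemain} by the same argument as in the proof of Theorem~\ref{vecweaknsatz},'' and you carry that argument out. Your handling of the double-perp step is in fact more careful than the paper's: Lemma~\ref{lemperp} as stated applies only to inner product spaces, so over an arbitrary field $\FF$ the orthonormal-basis proof is unavailable, and your dimension-count argument via nondegeneracy of the standard bilinear form on $\FF^n$ is the right replacement.
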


A variant of Theorem \ref{vecstrongnsatz} for free polynomials is proved in \cite{c2} and extended to matrices of free polynomials in \cite{nelson}.
It works only for real and complex coefficients and it requires the notion of a real radical of a left ideal.

The precise analogue of Theorem \ref{vecweaknsatz} for matrices of usual polynomials fails by \cite[Example 3.1]{akm}.
(This example uses only constant polynomials but it cannot be adapted to matrices of free polynomials because evaluations are different.) 
For given $P_1,\ldots,P_r \in M_n(\FF[x_1,\ldots,x_d])$ it would be interesting to have an algebraic 
characterization of the set of all $Q \in M_n(\FF[x_1,\ldots,x_d])$ for which $Q(a)v \in \span\{P_1(a)v,\ldots,P_r(a)v\}$
for every $a \in \FF^d$ and every $v \in \FF^n$.

A variant of Theorem \ref{vecstrongnsatz} for matrices of usual polynomials was proved in \cite[Theorem 3]{c3}. 
It also requires real or complex coefficients and the notion of a real radical of a left ideal.

\end{document}